\documentclass[12pt]{amsart}
\usepackage{amsfonts, amssymb, latexsym}
\usepackage{tikz}
\usepackage{tikz-cd}
\usepackage{hyperref}
\usepackage{wrapfig}
\usepackage{enumerate}

\setlength{\oddsidemargin}{0in}
\setlength{\evensidemargin}{0in}
\setlength{\marginparwidth}{0in}
\setlength{\marginparsep}{0in}
\setlength{\marginparpush}{0in}
\setlength{\topmargin}{0in}
\setlength{\headheight}{0pt}
\setlength{\headsep}{0pt}
\setlength{\footskip}{.3in}
\setlength{\textheight}{9.21in}
\setlength{\textwidth}{6.5in}
\setlength{\parskip}{4pt}

\newtheorem{theorem}{Theorem}[section]
\newtheorem*{theorem*}{Theorem}
\newtheorem{proposition}[theorem]{Proposition}

\newtheorem*{claim*}{Claim}

\newtheorem{Main Conjecture}[theorem]{Main Conjecture}

\theoremstyle{remark}

\newtheorem{example}[theorem]{Example}

\theoremstyle{plain}

 %{\bf QED}}

 % Long right arrow with superscript #1
 % Maps arrow with superscript #1

%
%
\newcommand{\excise}[1]{}%{$\star$\textsc{#1}$\star$}

\newcommand{\bm}[1]{\mbox{\boldmath $#1$}}

%%%%%%%%%

\begin{document}
\pagestyle{plain}

\mbox{}
\title{Rhombic tilings and Bott-Samelson varieties}
\author{Laura Escobar}
\address{Department of Mathematics \\ U.~Illinois at Urbana--Champaign \\ Urbana, IL 61801 \\ USA}
\email{lescobar@illinois.edu}
\author{Oliver Pechenik}
\address{Department of Mathematics \\ U.~Illinois at Urbana--Champaign \\ Urbana, IL 61801 \\ USA}
\email{pecheni2@illinois.edu}
\author{Bridget Eileen Tenner}
\address{Department of Mathematical Sciences \\ DePaul University \\ Chicago, IL 60614 \\ USA}
\email{bridget@math.depaul.edu}
\author{Alexander Yong}
\address{Department of Mathematics \\ U.~Illinois at Urbana--Champaign \\ Urbana, IL 61801 \\ USA}
\email{ayong@uiuc.edu}
\date{June 9, 2016}

\begin{abstract}
S.~Elnitsky (1997) gave an elegant bijection between rhombic tilings of $2n$-gons and commutation classes of reduced words in the symmetric group on $n$ letters. P.~Magyar (1998) found an important construction of the Bott-Samelson varieties introduced by H.C.~Hansen (1973) and M.~Demazure (1974). We explain a natural connection between S.~Elnitsky's and P.~Magyar's results. This suggests using tilings to encapsulate Bott-Samelson data (in type $A$). It also indicates a geometric perspective on S.~Elnitsky's combinatorics. We also extend this construction by
assigning desingularizations to the 
zonotopal tilings considered by B.~Tenner (2006).
\end{abstract}

\maketitle

\section{Introduction}
Let ${X}={\sf Flags}({\mathbb C}^n)$ be the variety of
complete flags 
${\mathbb C}^0\subset F_1\subset F_2\subset
\cdots\subset F_{n-1}\subset {\mathbb C}^n$. 
The group ${\sf GL_n}(\mathbb{C})$
acts on the variety $X$ by change of basis, as does
its subgroup ${\sf B}$ of invertible upper triangular matrices and its maximal torus ${\sf T}$ of 
invertible diagonal matrices.
The ${\sf T}$-fixed points are
in bijection with permutations $w$ in the symmetric group ${\mathfrak S}_n$: they are the flags 
$F_{\bullet}^{(w)}$ defined by $F_k^{(w)}=\langle \vec e_{w(1)}, \vec e_{w(2)},\ldots, \vec e_{w(k)}\rangle$ where $\vec e_i$ is the $i$-th standard basis vector. 
The {\bf Schubert variety} $X_w$ is the ${\sf B}$-orbit closure of $F_{\bullet}^{(w)}$.

There is longstanding interest in 
singularities of Schubert varieties; see, for example, the text by S.~Billey-V.~Lakshmibai \cite{Billey-Lakshmibai}. Famously, H.C.~Hansen \cite{Hansen} and M.~Demazure \cite{Demazure} independently presented (in all Lie types) resolutions of singularities $BS^{(i_1,i_2,\ldots,i_{\ell(w)})}$ of $X_w$, one for each reduced word 
$s_{i_1}s_{i_2}\cdots s_{i_{\ell(w)}}$ of $w$. M.~Demazure called these resolutions 
{\bf Bott-Samelson varieties} in reference to 
a related construction of R.~Bott-H.~Samelson \cite{Bott.Samelson}. 
In more recent work, P.~Magyar \cite{Magyar} found an important description of Bott-Samelson varieties. 

We propose a canonical connection between
P.~Magyar's work and the rhombic tilings of S.~Elnitsky \cite{Elnitsky}. In this way, tilings graphically
encapsulate Bott-Samelson data. (One should compare what follows to the similar use of X.~Viennot's {\it heaps}
\cite{Viennot}
to present Bott-Samelsons; see N.~Perrin's \cite{Perrin} and B.~Jones-A.~Woo's \cite{Woo}.)

\begin{figure}[htbp]
\begin{center}
\includegraphics[scale=0.35]{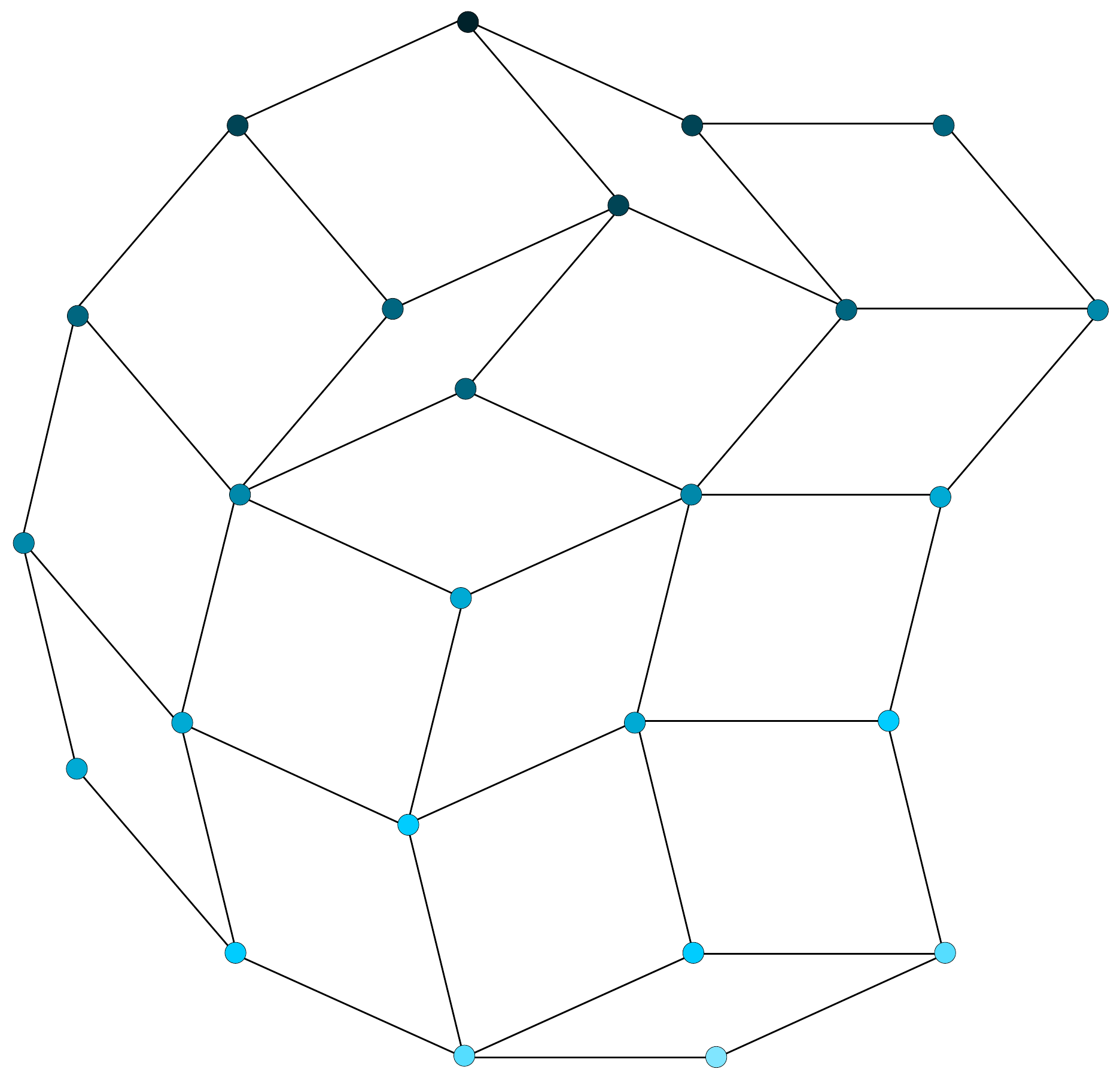}
\put(-76,-10){$\mathbb{C}^0$}
\put(-118,-10){$\mathbb{C}^1$}
\put(-160,8){$\mathbb{C}^ 2$}
\put(-192,42){$\mathbb{C}^ 3$}
\put(-201,87){$\mathbb{C}^ 4$}
\put(-190,128){$\mathbb{C}^ 5$}
\put(-162,162){$\mathbb{C}^ 6$}
\put(-112,184){$\mathbb{C}^ 7$}
\put(-27,17){$G_1$}
\put(-35,56){$G_2$}
\put(-25,93){$G_3$}
\put(-1,127){$G_4$}
\put(-78,167){$G_6$}
\put(-38,167){$G_5$}
\end{center}
\caption{The rhombic tiling picture of Bott-Samelson varieties, for the polygon ${\sf E}(7456312)$.}\label{fig:rhombic tiling example}
\end{figure}

Given a permutation $w \in S_n$, the {\bf Elnitsky $\bm{2n}$-gon} ${\sf E}(w)$
has sides of length one, and these are labeled, in order, by
$1,2,\ldots,n, w(n), w(n-1), \ldots, w(1),$
in which the first $n$ labels form half of a regular $2n$-gon, and sides with the same label are parallel.  In Figure~\ref{fig:rhombic tiling example}, we give the Elnitsky $14$-gon for the permutation $7456312 \in S_7$; this example will be referenced throughout this work.

Let ${\mathcal T}(w)$ be the set of {\bf rhombic tilings} of ${\sf E}(w)$ in which the rhombi have sides of length one and edges parallel to edges of ${\sf E}(w)$. The main result of S.~Elnitsky's aforementioned work is that the set ${\mathcal T}(w)$ is in bijection with the commutation classes of reduced words of $w$ \cite[Theorem 2.2]{Elnitsky}.

We associate a vector space to each vertex of a tiling $T \in {\mathcal T}(w)$. Starting with the vertex between the edges labeled $1$ and $w(1)$, label the vertices of ${\sf E}(w)$ in clockwise order by
$${\mathbb C}^0, {\mathbb C}^1,\ldots, {\mathbb C}^n, G_{n-1}, G_{n-2}, \ldots, G_1.$$
In general,
let $V_{x}$ be the vector space associated to a vertex $x$ in the tiling.  The dimension of 
$V_x$ is the minimal path length from ${\mathbb C}^0$ to $x$ along tile edges. In Figure~\ref{fig:rhombic tiling example}, we have only labeled the external vertices.

For adjacent vertices $x$ and $y$ in a tiling $T\in {\mathcal T}(w)$, write 
$x\to y$
if $\dim (V_x)+1=\dim(V_y)$. Let
${\tt Vert}(T)$
be the vertices of $T$, and define 
\[{\mathcal Z}_{T}:=\big\{(V_x: x\in {\tt Vert}(T)): V_y\subseteq V_z \text{\ \ if $y\to z$}\big\}\subset
\prod_{x\in {\tt Vert}(T)} {\sf Gr}_{\dim(V_x)}({\mathbb C}^n),
\]
where ${\sf Gr}_k({\mathbb C}^n)$ is the Grassmannian of $k$-dimensional subspaces of ${\mathbb C}^n$.

Define the map
$\pi:{{\mathcal Z}}_{T  }\to X$
by forgetting all vector spaces except those labeled by the vertices $G_1,G_2,\ldots,G_{n-1}$. In our example, $\pi$
maps the point depicted in Figure~\ref{fig:rhombic tiling example} to the complete
flag 
\[\mathbb{C}^0\subset G_1\subset G_2\subset G_3\subset G_4\subset G_5\subset G_6\subset {\mathbb C}^7.\] 

The following theorem suggests a Schubert-geometric interpretation of tilings of Elnitsky
polygons.

\begin{theorem}
\label{thm:main}
For $T\in {\mathcal T}(w)$, ${\mathcal Z}_{T}$ is a Bott-Samelson variety, i.e.,
  a desingularization $\pi:{\mathcal Z}_{T}\to X_w$. Conversely,
every Bott-Samelson variety $BS^{(i_1,\ldots,i_{\ell(w)})}$ is canonically isomorphic to ${\mathcal Z}_{T}$ for some $T\in\mathcal{T}(w)$ where 
$w=s_{i_1}\ldots s_{i_\ell(w)}$ and $T$ is given 
in an explicit manner by \cite[Theorem~2.2]{Elnitsky}.
\end{theorem}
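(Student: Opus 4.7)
The plan is to leverage P.~Magyar's realization of $BS^{(i_1,\ldots,i_{\ell(w)})}$ as a configuration variety of subspaces together with S.~Elnitsky's bijection between $\mathcal{T}(w)$ and commutation classes of reduced words. First, I would fix $T\in\mathcal{T}(w)$ and choose a linear ordering $R_1,R_2,\ldots,R_{\ell(w)}$ of the rhombi of $T$ that is a \emph{shelling}, meaning each $R_k$ shares an edge with the union $\bigcup_{j<k} R_j$ together with the left boundary path $\mathbb{C}^0\to \mathbb{C}^1\to\cdots\to \mathbb{C}^n$. Each such shelling records a reduced word $s_{i_1}\cdots s_{i_{\ell(w)}}$ of $w$ in the commutation class associated to $T$, and any two shellings differ by swaps of adjacent independent rhombi, i.e., by commutations of disjoint simple reflections.

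Next, I would identify $\mathcal{Z}_T$ with Magyar's $BS^{(i_1,\ldots,i_{\ell(w)})}$ by an explicit iterative argument. For each rhombus $R_k$, its four vertices carry subspaces forming a diamond $V_a\subset V_b$, $V_a\subset V_c$, $V_b,V_c\subset V_d$ with $\dim V_b=\dim V_c=\dim V_a+1=\dim V_d-1$; this is exactly the local data added by the $k$-th simple reflection in Magyar's description as an iterated $\mathbb{P}^1$-bundle. By induction on $k$, I would show that the partial configuration variety obtained after attaching $R_1,\ldots,R_k$ agrees with Magyar's $k$-th stage, so that after $\ell(w)$ steps we obtain $\mathcal{Z}_T\cong BS^{(i_1,\ldots,i_{\ell(w)})}$.

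Given this isomorphism, the claim that $\pi$ is a desingularization of $X_w$ is inherited from the classical theory of Bott-Samelson varieties: under the identification with Magyar's model, $\pi$ is the standard map to ${\sf Flags}(\mathbb{C}^n)$, whose image is $X_w$ and which is birational onto its image because the recorded word is reduced. The converse direction reverses this procedure: given $BS^{(i_1,\ldots,i_{\ell(w)})}$, Elnitsky's bijection produces a unique $T\in\mathcal{T}(w)$ corresponding to the commutation class of $s_{i_1}\cdots s_{i_{\ell(w)}}$, and the same iterative identification yields $BS^{(i_1,\ldots,i_{\ell(w)})}\cong \mathcal{Z}_T$ canonically.

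The principal obstacle is verifying that the global inclusion conditions defining $\mathcal{Z}_T$ coincide exactly with the inductive conditions in Magyar's construction, rather than only implying them in one direction. Each interior vertex of $T$ is shared by several rhombi, so one must check that the diamond conditions imposed simultaneously at all rhombi neither over-constrain nor under-constrain the configuration as compared with Magyar's sequential bundle construction. Equivalently, shelling-independence of the construction must be shown to correspond precisely to commutation equivalence of reduced words; this is where Elnitsky's combinatorics feeds back into the geometric argument and is what makes the tiling a faithful encoding of the Bott-Samelson data.
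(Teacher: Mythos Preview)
Your proposal is correct and follows essentially the same route as the paper: choose a shelling of $T$ to extract a reduced word, invoke Magyar's configuration-space model to identify $\mathcal{Z}_T$ with $BS^{(i_1,\ldots,i_{\ell(w)})}$, and appeal to Elnitsky's bijection for the converse and for shelling-independence. The only packaging difference is that the paper uses Magyar's \emph{list of flags} $(F_\bullet^0,\ldots,F_\bullet^{\ell(w)})$ rather than the iterated $\mathbb{P}^1$-bundle picture, reading $F_\bullet^k$ directly off the boundary path $B_k$; this makes the point-by-point bijection transparent and immediately dissolves the ``principal obstacle'' you flag, since every interior vertex of $T$ lies on some $B_k$ and the inclusion conditions along $B_k$ are exactly those of a flag.
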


In Section~\ref{sec:proof}, we prove Theorem~\ref{thm:main}. The remainder of this paper concerns other Bott-Samelson data encoded by tilings.  In Section~\ref{sec:flip_zone}, we explain how the hexagon \emph{flips} of \cite[Section~3]{Elnitsky} may be interpreted geometrically. This naturally leads to the \emph{zonotopal tilings} of \cite{Tenner-RDPP}, each of which corresponds to a desingularization of a Schubert variety. We collect some additional discussion in Section~\ref{sec:final}; in particular, we explain how coloring rhombi of a tiling 
describes ${\sf T}$-fixed points as well as 
a standard stratification of a Bott-Samelson variety.

\section{Proof of Theorem~\ref{thm:main}}\label{sec:proof}

Two reduced words for $w$ are {\bf commutation equivalent} if they can be obtained from one another using only the relation
$s_is_j=s_js_i$ when $|i-j|>1$. 

By \cite[Theorem~2.2]{Elnitsky}, the set ${\mathcal T}(w)$ bijects with commutation classes  of reduced words of $w$. (Note that our orientation of the polygon is a horizontal reflection of the orientation given in \cite{Elnitsky}.) To link with \cite{Magyar}, we recall the bijection. Consider a tiling $T\in {\mathcal T}(w)$. The edges of $T$ that coincide with edges of ${\sf E}(w)$ inherit the labels of those edges, and we label the interior edges of $T$ so that parallel edges have the same labels.

Let $B_0$ be the {\bf base boundary} of ${\sf E}(w)$, formed by the edges of the polygon appearing clockwise between ${\mathbb C}^0$ and ${\mathbb C}^n$. Pick any rhombus $R_1$ of $T$ that shares two edges with $B_0$. Set $i_1 := d_1+1$, where $d_1$ is the minimum distance from ${\mathbb C}^0$ to $R_1$. Remove $R_1$ and define a new boundary, $B_1$, from $B_0$ by using the other two edges of $R_1$ instead. Now repeat this process: pick any rhombus $R_2$ that shares two edges with $B_1$; set $i_2 := d_2 + 1$, where $d_2$ is the minimum distance from ${\mathbb C}^0$ to $R_2$; remove $R_2$ and form a new boundary $B_2$. Iterating this process an additional $\ell(w) - 2$ times produces $(i_1,i_2,\ldots,i_{\ell(w)})$, for which $s_{i_1}s_{i_2} \cdots s_{i_{\ell(w)}}$ represents a commutation class of reduced words for $w$. The other direction of the bijection is indicated below.

We now show that ${\mathcal Z}_{T}$ is isomorphic to  $BS^{(i_1,i_2,\ldots,i_{\ell(w)})}$. 
P.~Magyar \cite[Theorem~1]{Magyar} describes $BS^{(i_1,i_2,\ldots,i_{\ell(w)})}$ as a list $(F_\bullet^0,\ldots,F_\bullet^m)$ of $m+1$ flags where $F_\bullet^0$ 
is the base 
flag, and such that $F_\bullet^k$ agrees with $F_\bullet^{k-1}$ everywhere except possibly on the $i_k$-th subspace. Such a
list of flags transparently corresponds in a one-to-one fashion to points in ${\mathcal Z}_{T}$: $F_\bullet^0$ is the base flag which is on the 
base boundary $B_0$ and in general, $F_\bullet^k$ is the flag on $B_k$. 

 Suppose that ${\bf j}=(j_1,j_2,\dots)$ is commutation equivalent to ${\bf i }=(i_1,i_2,\dots)$. It is well-known to experts that $BS^{\bf i}$ and $BS^{\bf j}$ are isomorphic varieties, but we include a proof for completeness. It suffices
to prove this when
${\bf j} = (i_1,\dots,i_{k+1},i_{k},\ldots, i_{\ell(w)})
$ 
differs from ${\bf i}$ only in positions $k$ and $k+1$. The general result then follows by induction.
Now, $(F_\bullet^0,\ldots,F_\bullet^m)$ is equivalent to a list of subspaces $(V_1,V_{2},\ldots)$ satisfying:
\begin{itemize}
\item $\dim(V_k)=i_k$;
\item $\mathbb{C}^{i_1-1}\subset V_{1}\subset \mathbb{C}^{i_1+1}$; that is, $V_{1}$ is contained in the $(i_1+1)$-dimensional subspace of $F_\bullet^0$ and contains the  $(i_1-1)$-dimensional subspace of $F_\bullet^0$;
\item $V_{2}$ is contained in the $(i_2+1)$-dimensional subspace of $F_\bullet^1$ and contains the  $(i_2-1)$-dimensional subspace of $F_\bullet^1$; and so on.
\end{itemize}
Since $|i_{k+1}-i_k|>1$, the $(i_k+1)$-, $(i_k-1)$-, $(i_{k+1}+1)$-, and $(i_{k+1}-1)$-dimensional subspaces of $F_\bullet^k$ are precisely the subspaces of $F_\bullet^{k-1}$ with those dimensions. So if a generic element of $BS^{\bf i}$ is $(V_{1},V_{2},\ldots)$, then a generic element of $BS^{\bf j}$ is $(V_{1},V_{2},\ldots,V_{{k+1}},V_{{k}},\ldots)$. 
That is, the isomorphism by switching factors: 
\[\tau_k :{\sf Gr}_{i_1}({\mathbb C}^n)\times\cdots\times {\sf Gr}_{i_k}({\mathbb C}^n) 
\times {\sf Gr}_{i_{k+1}}({\mathbb C}^n) \times \cdots
\to {\sf Gr}_{i_1}({\mathbb C}^n)\times\cdots\times {\sf Gr}_{i_{k+1}}({\mathbb C}^n)
\times {\sf Gr}_{i_{k}}({\mathbb C}^n)\times \cdots\]
restricts to a canonical isomorphism from 
$BS^{(i_1,i_2,\ldots)}$ to $BS^{(i_1,\dots,i_{k+1},i_{k},\ldots)}$. In other words,
${\mathcal T}(w)$ indexes Bott-Samelson varieties up to 
commutation equivalence.

Given ${\bf i} = (i_1,i_2,\ldots)$ representing a commutation class for $w$ (that is, $s_{i_1}s_{i_2}\cdots$ is a reduced decomposition of $w$), the inverse map to S.~Elnitsky's bijection constructs an ordered tiling of ${\sf E}(w)$, as follows. For $k \ge 1$, set $w^{(k)} := s_{i_1}s_{i_2}\cdots s_{i_k}$. By \cite{Elnitsky}, for $1 \le k \le \ell(w)$, the values $w^{(k)}(i_k)$ and $w^{(k)}(i_k+1)$ label adjacent edges of the boundary $B_{k-1}$. Place a rhombus, $R_k$, so that two of its edges coincide with the edges labeled $w^{(k)}(i_k)$ and $w^{(k)}(i_k+1)$ in $B_{k-1}$, and define the new boundary $B_k$ from $B_{k-1}$ by using the other two edges of $R_k$. This explicitly picks ${\mathcal Z}_{T}$ from ${\bf i}$ such that ${\mathcal Z}_{T}\cong 
BS^{\bf i}$, as desired.\qed
\begin{example}
Consider the tiling $T \in \mathcal{T}(7456312)$ depicted in Figure~\ref{fig:rhombic tiling example}. One way to select the rhombi $\{R_1, R_2,\ldots\}$ described in the proof of Theorem~\ref{thm:main} is shown in Figure~\ref{fig:rhombic tiling example with labels}, where we have recorded only the subscript $k$ of the rhombus $R_k$. The labeling in this figure represents the commutation class of the reduced word
\[s_3s_4s_2s_5s_6s_5s_3s_4s_3s_2s_1s_5s_2s_3s_6s_4s_5\]
for the permutation $7456312$. Any other such labeling of these tiles would produce a different, but commutation equivalent, reduced word. For example, the labeling obtained by swapping the selections for  $R_{14}$ and $R_{15}$, both of which share two edges with the boundary $B_{15}$, as indicated in Figure~\ref{fig:rhombic tiling example with labels},  produces the commutation equivalent reduced word
\[s_3s_4s_2s_5s_6s_5s_3s_4s_3s_2s_1s_5s_2s_6s_3s_4s_5.\]
\begin{figure}[htbp]
\begin{center}
\includegraphics[scale=0.35]{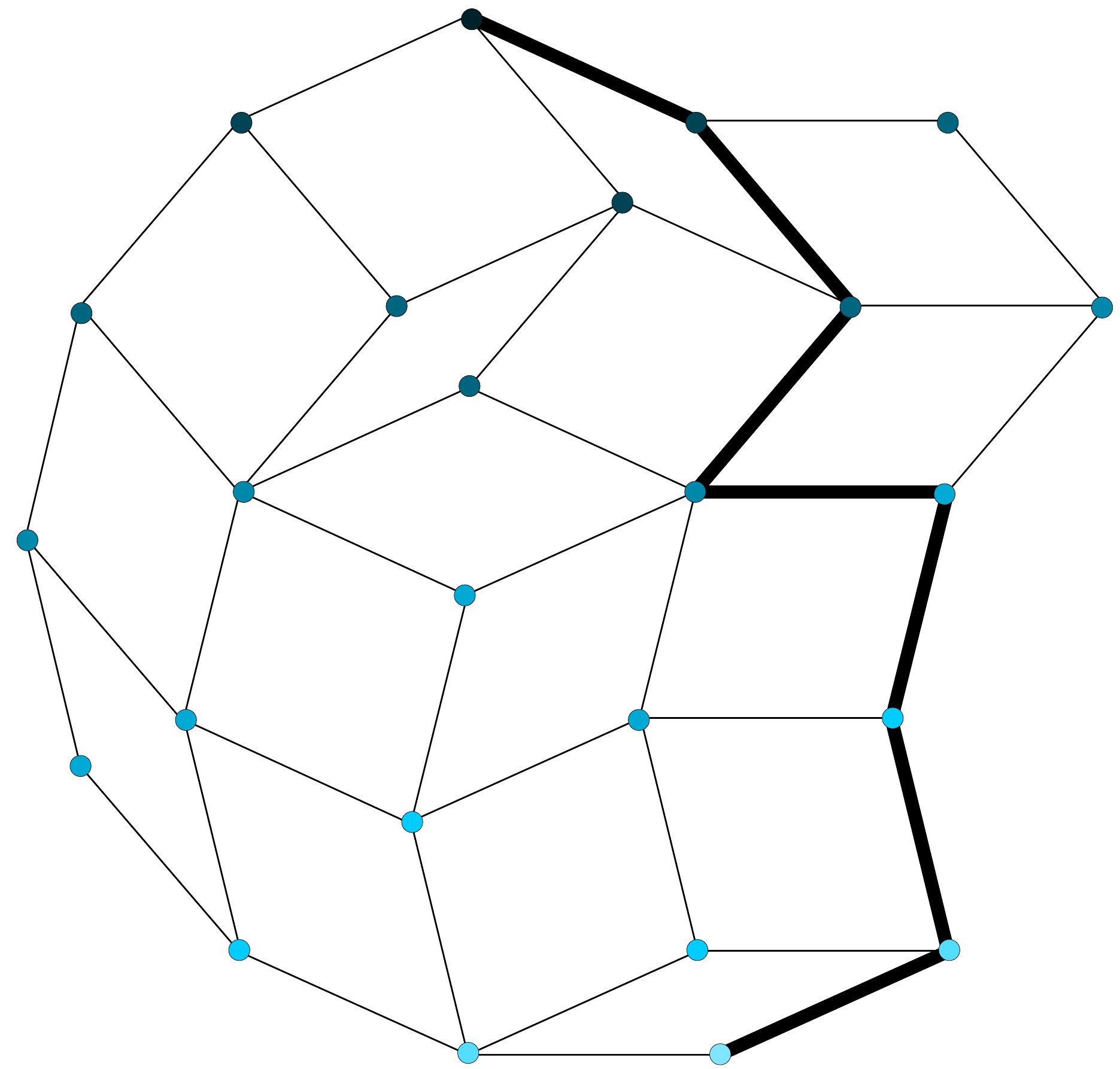}
\put(-42,140) {$17$}
\put(-42,110) {$16$}
\put(-84,149.5) {$15$}
\put(-63,74) {$14$}
\put(-63,36) {$13$}
\put(-85,118) {$12$}
\put(-74,8) {$11$}
\put(-102,27) {$10$}
\put(-99,65) {$9$}
\put(-114,94) {$8$}
\put(-138,65) {$7$}
\put(-121,118) {$6$}
\put(-120,150) {$5$}
\put(-153,125) {$4$}
\put(-137,26) {$3$}
\put(-170,89) {$2$}
\put(-170,50) {$1$}
\end{center}
\caption{A labeling of the rhombi in an element of ${\mathcal T}(7456312)$, corresponding to the reduced word $s_3s_4s_2s_5s_6s_5s_3s_4s_3s_2s_1s_5s_2s_3s_6s_4s_5$  for the permutation $7456312$. The boundary $B_{15}$ is indicated by thick line segments.}\label{fig:rhombic tiling example with labels}
\end{figure}
\end{example}

\section{Flips and zonotopal tilings}\label{sec:flip_zone}

\subsection{Flips}\label{sec:flip}
Any pair of rhombic tilings of ${\sf E}(w)$ are connected by a sequence of hexagon ``flips'' \cite[Section~3]{Elnitsky}. 
The effect of a single flip is depicted in Figure~\ref{fig:hex flip}. 

\begin{figure}[htbp]
\begin{center}
\includegraphics[scale=0.4]{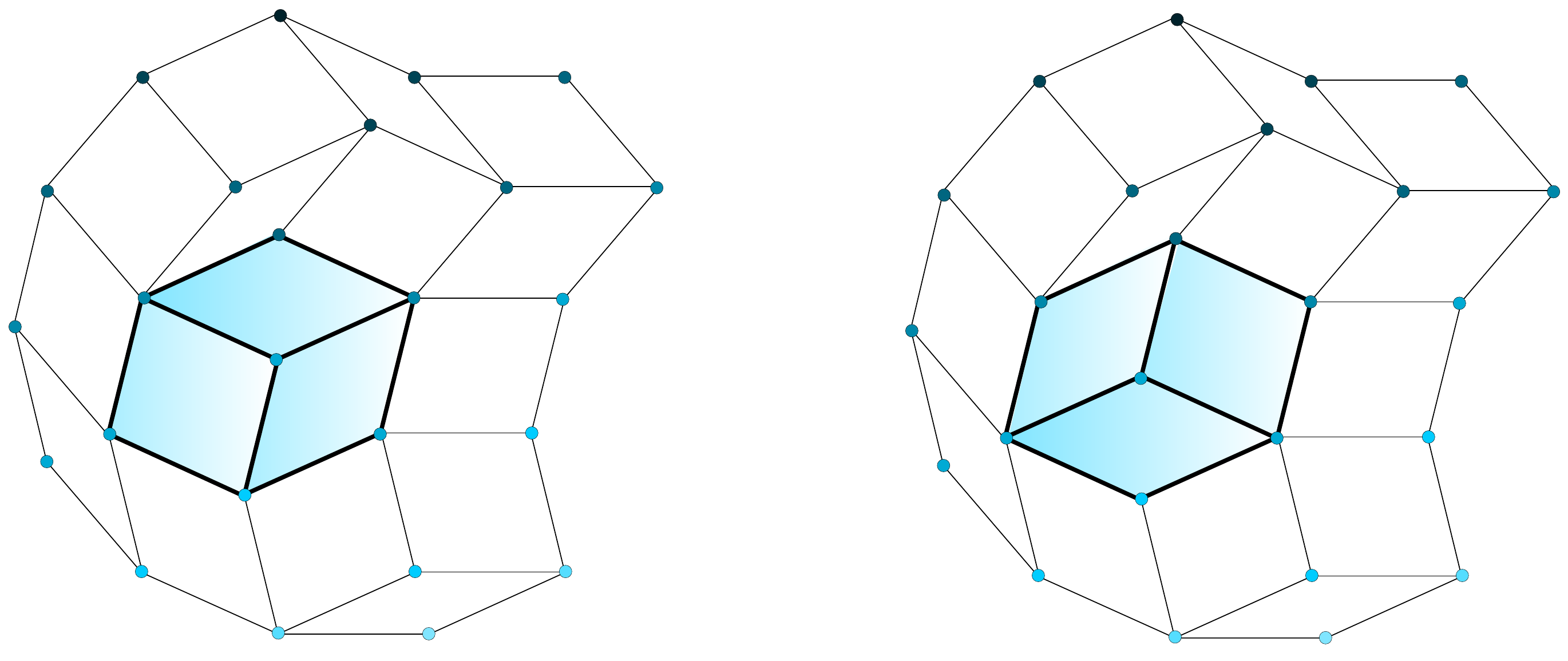}
\put(-170,60){$\longleftrightarrow$}
\end{center}
\caption{Two elements of ${\mathcal T}(7456312)$, related by a hexagon flip.}\label{fig:hex flip}
\end{figure}

This flip has a geometric interpretation. Let $T, T' \in \mathcal{T}(w)$ be two rhombic tilings that differ by a single flip. Let $T_H$ be the tiling of ${\sf E}(w)$ obtained from $T$ (or, equivalently, from $T'$) by erasing the three internal edges by which $T$ and $T'$ differ, and placing a hexagonal tile  in
the flip location. 
 As before, associate vector spaces $V_x$ to each vertex $x$ in $T_H$, where $\dim(V_x)$ equals the distance from $x$ to $\mathbb{C}^0$. The resulting space ${\mathcal Z}_{T_H}$ is similar to a Bott-Samelson variety: instead of being $\ell(w)$-fold iterated ${\mathbb C}{\mathbb P}^1$-bundles over the base flag, we replace three of these ${\mathbb C}{\mathbb P}^1$-bundles (corresponding to either triple of rhombi in the hexagon) by a ${\sf Flags}({\mathbb C}^3)$-bundle. We then have
\begin{center}
\includegraphics[scale=1.15]{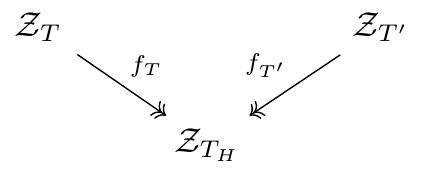}
\excise{
\begin{tikzcd} 
{\mathcal Z}_{T} \arrow[two heads]{rd}{f_T} &\phantom{2} &{\mathcal Z}_{T'} \arrow[two heads, ld, "f_{T'}"']\\
&{\mathcal Z}_{T_H}
\end{tikzcd}}
\end{center}
where the two maps are the projections determined by forgetting the vector space attached to the internal vertex of the hexagon. 

\subsection{Zonotopal tilings}\label{sec:zonotope}
The tiling $T_H$ described above is a special case of the ``zonotopal'' tilings of Elnitsky polygons, which were studied by the third author in \cite{Tenner-RDPP}. To be precise, a {\bf $\bm{2}$-zonotope} is the projection of a regular $q$-dimensional cube onto the ($2$-dimensional) plane; equivalently, a $2$-zonotope is a centrally symmetric convex polygon. A {\bf zonotopal tiling} of a region is a tiling by $2$-zonotopes. Figure~\ref{fig:zonotope} shows a zonotopal tiling
of ${\sf E}(87465312)$ using one octagon, three hexagons, and ten rhombi.

Let $\mathcal{T}_{zono}(w)$ be the collection of zonotopal tilings of ${\sf E}(w)$, in which the tiles ($2$-zonotopes) have sides of length one and edges parallel to edges of ${\sf E}(w)$. Because rhombi are a type of $2$-zonotope, we have $\mathcal{T}(w) \subseteq \mathcal{T}_{zono}(w)$.

\begin{figure}[htbp]
\begin{center}
\includegraphics[scale=0.28]{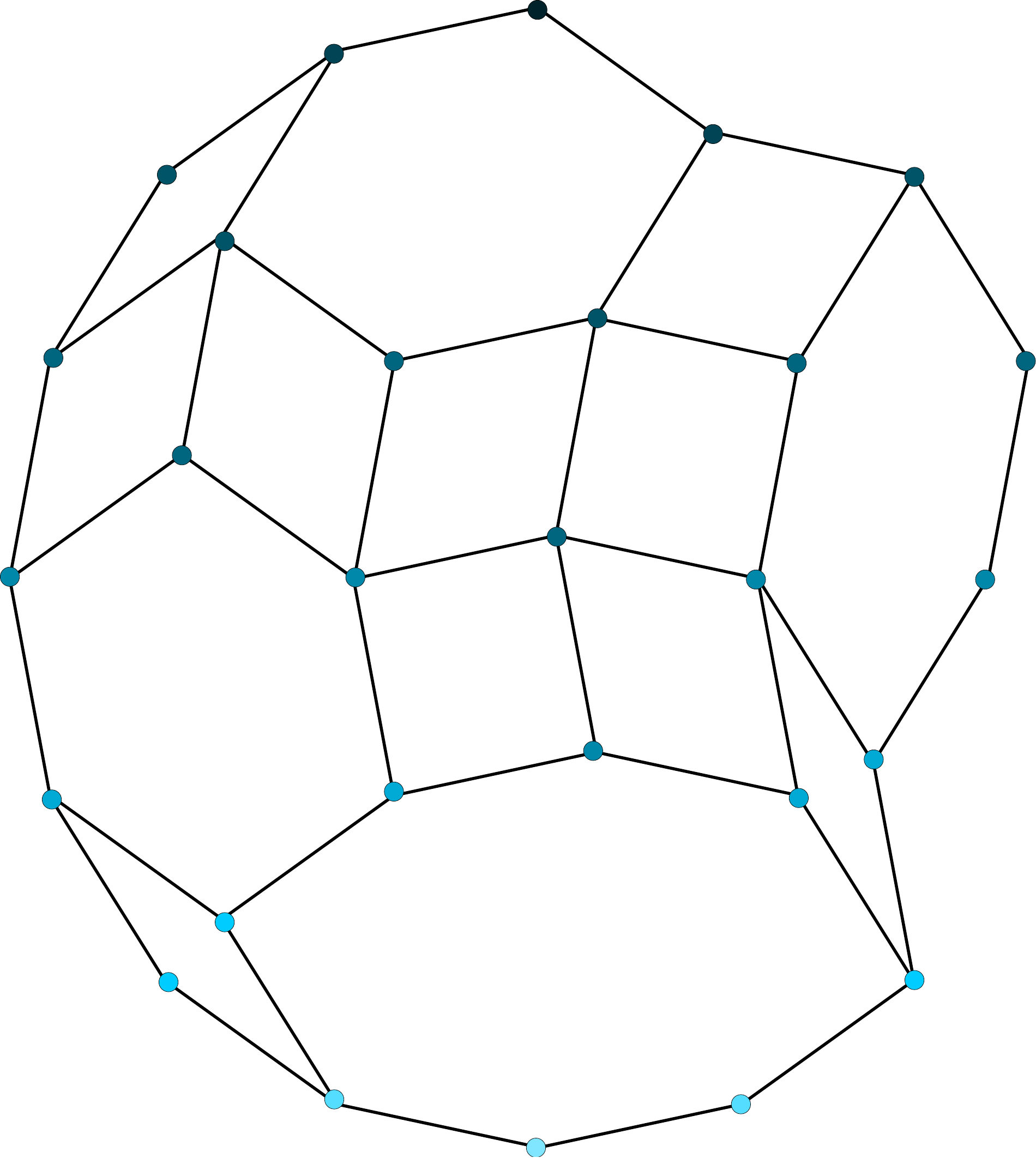}
\put(-79,-12){$\mathbb{C}^0$}
\put(-113,-5){$\mathbb{C}^1$}
\put(-144,17){$\mathbb{C}^ 2$}
\put(-162,49){$\mathbb{C}^ 3$}
\put(-169,82){$\mathbb{C}^ 4$}
\put(-162,116){$\mathbb{C}^ 5$}
\put(-144,146){$\mathbb{C}^ 6$}
\put(-110,168){$\mathbb{C}^ 7$}
\put(-78,175){$\mathbb{C}^ 8$}
\put(-44,-3){$G_1$}
\put(-14,22){$G_2$}
\put(-20,55){$G_3$}
\put(-4,80){$G_4$}
\put(2,113){$G_5$}
\put(-22,150){$G_6$}
\put(-50,157){$G_7$}
\end{center}\caption{A zonotopal tiling for the permutation 87465312.}\label{fig:zonotope}
\end{figure}

Given a zonotopal tiling $Z \in \mathcal{T}_{zono}(w)$, we can define its corresponding {\bf generalized Bott-Samelson variety} ${\mathcal Z}_{Z}$ by extending the construction from Section~\ref{sec:flip}. For each vertex $x$ in the zonotopal tiling, associate a vector space $V_x$ whose dimension is the minimal path length from the bottom vertex to $x$ along tile edges. Define 
\[{\mathcal Z}_{Z}:=\{(V_x: x\in {\tt Vert}(Z)): V_y\subseteq V_z \text{\ \ if $y\to z$}\}.
\] Let $T$ be a rhombic tiling that refines $Z$; ${\mathcal Z}_T$ may be constructed as iterated ${\mathbb C}{\mathbb P}^1$-bundles over a point.
In the analogous construction of ${\mathcal Z}_Z$, for each $2k$-gon of $Z$, we replace $k$ ${\mathbb C}{\mathbb P}^1$-bundles with a ${\sf Flags}({\mathbb C}^k)$-bundle. The variety ${\mathcal Z}_Z$ is smooth of dimension $\ell(w)$. Define $\pi_Z : {{\mathcal Z}}_Z \to X_w$
by forgetting all vector spaces except those labeled by the vertices $G_1,G_2,\ldots,G_{n-1}$. 

\begin{theorem}Given a zonotopal tiling $Z \in \mathcal{T}_{zono}(w)$, its corresponding generalized Bott-Samelson variety ${\mathcal Z}_{Z}$ together with the map $\pi_Z : {\mathcal Z}_{Z} \to X_w$ is a resolution of singularities.
\end{theorem}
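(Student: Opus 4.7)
The plan is to reduce to Theorem~\ref{thm:main} by refining $Z$ to a rhombic tiling. I will choose a rhombic refinement $T \in {\mathcal T}(w)$ of $Z$, construct a natural morphism $g \colon {\mathcal Z}_T \to {\mathcal Z}_Z$ satisfying $\pi_T = \pi_Z \circ g$, and then show that $g$ has enough properties to transfer the ``resolution of singularities'' property from $\pi_T$ to $\pi_Z$.

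First I construct $T$. Each $2k$-gon tile of $Z$ is itself a centrally symmetric convex polygon with $k$ unit-length edge directions, and hence admits a rhombic tiling into $\binom{k}{2}$ unit rhombi; this tiling corresponds, via S.~Elnitsky's bijection applied to the tile alone, to a reduced word for the longest element of $S_k$. Concatenating these local refinements yields a rhombic tiling $T \in {\mathcal T}(w)$ with ${\tt Vert}(Z) \subseteq {\tt Vert}(T)$. The forgetful map
\[ g \colon {\mathcal Z}_T \longrightarrow {\mathcal Z}_Z, \quad (V_y)_{y \in {\tt Vert}(T)} \mapsto (V_x)_{x \in {\tt Vert}(Z)}, \]
is well-defined (the incidence relations required by ${\mathcal Z}_Z$ are inherited from those in ${\mathcal Z}_T$) and visibly satisfies $\pi_T = \pi_Z \circ g$.

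The key step is to analyze $g$. Since both ${\mathcal Z}_T$ and ${\mathcal Z}_Z$ are closed subvarieties of products of Grassmannians, $g$ is automatically proper. Tile by tile, $g$ decomposes as follows: at the ${\sf Flags}({\mathbb C}^k)$-bundle step corresponding to a $2k$-gon tile $t$, the map is induced by the Bott-Samelson resolution of ${\sf Flags}({\mathbb C}^k) = X_{w_0}$ (for $w_0$ the longest element of $S_k$), which is surjective and birational by Theorem~\ref{thm:main} applied to $S_k$. Iterating over all tiles, $g$ itself is surjective and birational.

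The remaining assertions then follow formally. Smoothness of ${\mathcal Z}_Z$ and $\dim {\mathcal Z}_Z = \ell(w) = \dim X_w$ are already established in the setup via the iterated flag bundle construction. Properness of $\pi_Z$ follows from projectivity. The image equals $X_w$ because $g$ is surjective:
\[ \pi_Z({\mathcal Z}_Z) = \pi_Z(g({\mathcal Z}_T)) = \pi_T({\mathcal Z}_T) = X_w \]
by Theorem~\ref{thm:main}. For birationality, note that $\pi_Z$ is generically finite (dimensions agree) and multiplicativity of degrees yields $1 = \deg(\pi_T) = \deg(\pi_Z) \cdot \deg(g)$, forcing $\deg(\pi_Z) = 1$. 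The main obstacle I anticipate is confirming the tile-local identification of $g$ with the Bott-Samelson resolution of ${\sf Flags}({\mathbb C}^k)$, matching Elnitsky's bijection on the tile with the iterated bundle structures compatibly; everything else reduces to this together with purely formal properties of proper surjective morphisms.
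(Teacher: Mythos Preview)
Your proof is correct and follows the same core strategy as the paper: refine $Z$ to a rhombic tiling $T$, define the forgetful map $g$ (the paper calls it $f$), and factor $\pi_T = \pi_Z \circ g$. The one difference is in how birationality of $\pi_Z$ is concluded. You establish that $g$ itself is birational via a tile-local identification with the Bott-Samelson resolution of ${\sf Flags}({\mathbb C}^k)$, and then invoke multiplicativity of degrees. The paper avoids this entirely: once $\pi_T$ is birational with rational inverse $\pi'_T$, the composite $g \circ \pi'_T$ is directly a rational inverse to $\pi_Z$, using only surjectivity of $g$ together with the already-stated facts that ${\mathcal Z}_Z$ is smooth (hence irreducible) of dimension $\ell(w)$. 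So the ``main obstacle'' you flag --- making the tile-local identification precise --- can be sidestepped for birationality; on the other hand, your tile-by-tile argument does supply a clean justification of the surjectivity of $g$, which the paper simply asserts.
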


\begin{proof}
Let $\pi_T : {\mathcal Z}_{T} \to X_w$ be a Bott-Samelson resolution where $T$ is any rhombic tiling that refines $Z$. We know that $\pi_T$ is birational, so let $\pi'_T$ be its rational inverse. Let $f : {\mathcal Z}_{T} \twoheadrightarrow {\mathcal Z}_{Z}$ be the projection determined by forgetting the vector spaces attached to the internal vertices of the $2k$-gons.
Since $f$ is surjective, the image of $\pi_Z$ is indeed $X_w$ and the following commutative diagram implies that $f\circ \pi'_T$ is a rational inverse to $\pi_Z$.
\begin{center}
\includegraphics[scale=1.15]{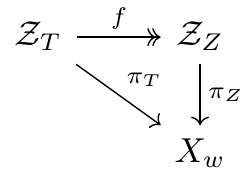}
\end{center}
\excise{
\[
\begin{tikzcd} 
{\mathcal Z}_{T} \arrow[two heads]{r}{f}\arrow{rd}{{\pi_T}}
                          &{\mathcal Z}_{Z} \arrow{d}{\pi_Z}\\
&X_w
\end{tikzcd}
\] } 
It follows that $\pi_Z : {\mathcal Z}_{Z} \to X_w$ is also a resolution of singularities. 
\end{proof}

The zonotopal tilings $\mathcal{T}_{zono}(w)$ of ${\sf E}(w)$ have a natural poset structure, as studied by the third author in \cite{Tenner-RDPP}. The order relation in this poset is given by reverse edge inclusion. Thus the rhombic tilings are the minimal elements in the poset. A pair of rhombic tilings differ by a single hexagon flip if and only if they are covered by a common element. Similarly, one can get a broader sense of how closely two rhombic tilings (equivalently, two commutation classes of reduced words for $w$) are related by determining their least upper bound in this poset. Geometrically, the relations in the poset $\mathcal{T}_{zono}(w)$ correspond to the projections ${\mathcal Z}_{Z} \twoheadrightarrow {\mathcal Z}_{Z'}$ between two generalized Bott-Samelsons for $X_w$.

By \cite[Theorem~6.13]{Tenner-RDPP}, the poset of zonotopal tilings of ${\sf E}(w)$ has a unique maximal element $\hat{Z}$ exactly in the case that $w$ avoids the patterns $4231$, $4312$, and $3421$. In this case, there is a distinguished  ${\mathcal Z}_{\hat{Z}}$ with a projection ${\mathcal Z}_{Z} \twoheadrightarrow {\mathcal Z}_{\hat{Z}}$ from every other generalized Bott-Samelson. Such permutations have been enumerated by T.~Mansour \cite{Mansour}.

For comparison, consider Elnitsky polygons whose zonotopal tilings do not contain any hexagonal tiles (equivalently, those polygons with a unique zonotopal tiling). These correspond to $321$-avoiding permutations, which are exactly those whose reduced words contain no long braid moves 
\cite[Theorem~2.1]{Billey-Jockusch-Stanley} (see also \cite[Section~3]{Tenner-RWM} for more general results relating pattern avoidance and reduced words). The unique tiling in this case is a deformation of the skew shape associated to the permutation by considering its {\it Rothe diagram} and removing empty rows and columns. 
A standard filling orders the tilings in the sense of \cite{Elnitsky} (and the final paragraph of the proof of Theorem~\ref{thm:main}).

We now have the following result (cf.~\cite[Remark~3.1]{Elek}, where this fact
for ordinary Bott-Samelsons is noted).

\begin{proposition}
Suppose that $Z\in \mathcal{T}_{zono}(w)$, and that the number of $2i$-sided tiles in $Z$ is $t_i$, for each $i\geq 1$. Then the Poincar\'e polynomial of the cohomology ring
$H^{\star}({\mathcal Z}_Z)$ is
\[\sum_{k=0}^{\ell(w)}\dim H^{2k}({\mathcal Z}_Z)q^k
=\prod_{i\geq 1}[i]_q!^{t_i},\] 
where $[i]_q:=1+q+q^2+\cdots + q^{i-1}$ and $[i]_q! := [i]_q  [i-1]_q \cdot \cdots  [1]_q$.
\end{proposition}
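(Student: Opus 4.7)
The plan is to realize $\mathcal{Z}_Z$ as an iterated tower of flag-variety bundles and then apply Leray--Hirsch stage by stage to multiply the Poincar\'e polynomials. Throughout, let $P(X;q):=\sum_{k}\dim H^{2k}(X)\,q^k$.

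First, I would extend the peeling procedure from the proof of Theorem~\ref{thm:main} to zonotopal tilings: order the tiles of $Z$ so that at each step one peels off a tile sharing a maximal consecutive chain of edges with the current boundary. Such a tile exists because any zonotopal tiling has a tile whose top vertex is maximal in the remaining subpolygon. Removing a $2k$-gon leaves a smaller zonotopal tiling $Z'$, and the forgetful projection $\mathcal{Z}_Z \twoheadrightarrow \mathcal{Z}_{Z'}$ is a Zariski-locally-trivial bundle with fiber ${\sf Flags}(\mathbb{C}^k)$---this is exactly the flag-bundle structure already indicated in Section~\ref{sec:zonotope}.

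Second, at each stage I would apply Leray--Hirsch. The cohomology $H^*({\sf Flags}(\mathbb{C}^k))$ is generated by the Chern classes of the tautological subbundles, and these pull back from the Chern classes of the tautological bundles on the Grassmannian factors ${\sf Gr}_j(\mathbb{C}^n)$ in which $\mathcal{Z}_Z$ sits, so they extend to the total space. Thus
\[
P(\mathcal{Z}_Z; q) = P(\mathcal{Z}_{Z'}; q) \cdot P({\sf Flags}(\mathbb{C}^k); q).
\]
Iterating down to the base case of a point, and using the classical Bruhat-cell count $P({\sf Flags}(\mathbb{C}^k); q) = \sum_{w \in S_k} q^{\ell(w)} = [k]_q!$, gives the claimed formula $\prod_{i \ge 1} [i]_q!^{t_i}$.

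The main obstacle I expect is the combinatorial--geometric step in the first stage: producing, at each step, a peelable $2k$-gon, and verifying that the corresponding forgetful map is Zariski-locally-trivial with fiber ${\sf Flags}(\mathbb{C}^k)$. For rhombic tilings this reduces to the standard iterated ${\mathbb C}{\mathbb P}^1$-bundle description of Bott--Samelson varieties; in the zonotopal case, the key point is that across a peelable $2k$-gon the interior data amounts to a complete flag in the $k$-dimensional quotient cut out by its top and bottom vertices, whose space of choices is exactly ${\sf Flags}(\mathbb{C}^k)$.
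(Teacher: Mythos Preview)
Your proposal is correct and follows essentially the same approach as the paper: the paper's proof simply invokes the iterated flag-bundle description of ${\mathcal Z}_Z$ already set up in Section~\ref{sec:zonotope}, quotes the Poincar\'e polynomial $[i]_q!$ of ${\sf Flags}({\mathbb C}^i)$ from the Schubert decomposition, and applies Leray--Hirsch. Your version is more explicit about the peeling order and the verification of the Leray--Hirsch hypotheses via Chern classes, but the underlying argument is identical.
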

\begin{proof}
The variety ${\mathcal Z}_Z$ is constructed as iterated
flag bundles over a point, where $t_i$ of the fibrations are by ${\sf Flags}({\mathbb C}^{i})$.
It is a standard fact (following from the Schubert decomposition of ${\sf Flags}({\mathbb C}^{i})$)
that the Poincar\'{e} polynomial of 
$H^\star({\sf Flags}({\mathbb C}^{i}))$
is $[i]_q!$ (indeed, $[i]_q!$ is the ordinary generating function for ${\mathfrak S}_i$ with each permutation weighted by Coxeter length). The proposition now follows from the Leray-Hirsch theorem (cf.~\cite[Theorem~4D.1]{Hatcher}).
\end{proof}

\section{Additional discussion}\label{sec:final}
\begin{figure}[htb]
\begin{center}
\includegraphics[scale=0.45]{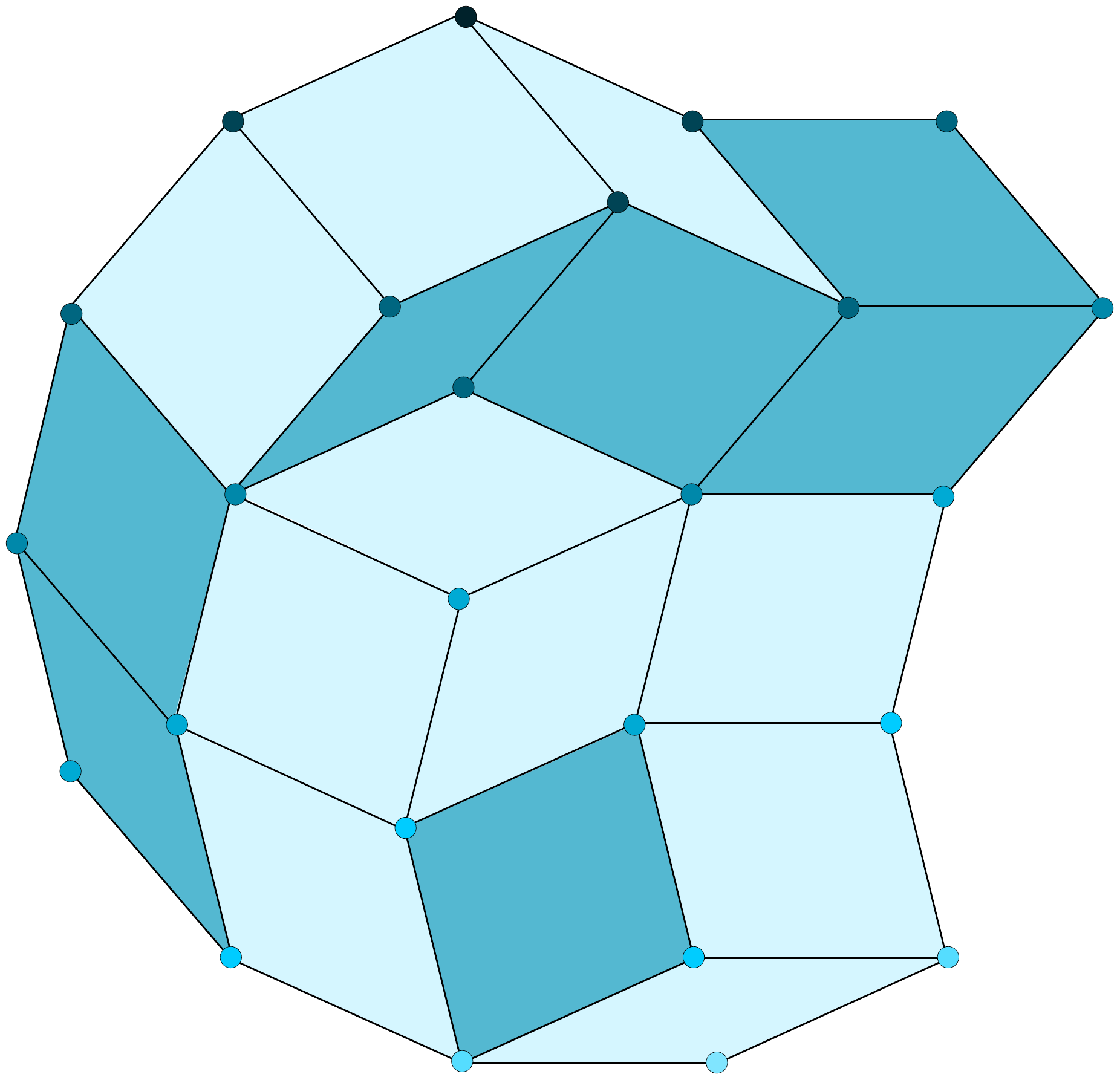}
\put(-90,-9){$\mathbb{C}^0$}
\put(-150,-8){$\mathbb{C}^1$}
\put(-210,18){$\mathbb{C}^ 2$}
\put(-245,60){$\mathbb{C}^ 3$}
\put(-255,110){$\mathbb{C}^ 4$}
\put(-240,168){$\mathbb{C}^ 5$}
\put(-200,211){$\mathbb{C}^ 6$}
\put(-148,234){$\mathbb{C}^ 7$}
\put(-200,79){$F_3^{(s_3)}$}
\put(-170,45){$\mathbb{C}^2$}
\put(-225,117){$F_4^{(s_3s_4)}$}
\put(-170,95){$F_3^{(s_3)}$}
\put(-174,162){$\mathbb{C}^5$}
\put(-126,186){$\mathbb{C}^6$}
\put(-135,148){$F_5^{(s_3s_4s_5)}$}
\put(-134,118){$F_4^{(s_3s_4)}$}
\put(-126,79){$F_3^{(s_3)}$}
\put(-90,32){$F_2^{(s_3s_2)}$}
\put(-33,22){$\mathbb{C}^1$}
\put(-46,75){$F_2^{(s_3s_2)}$}
\put(-35,122){$F_3^{(s_3)}$}
\put(-60,154){$\mathbb{C}^5$}
\put(0,163){$\mathbb{C}^4$}
\put(-96,212){$\mathbb{C}^6$}
\put(-42,212){$F_5^{(s_5)}$}\end{center}
\caption{A coloring corresponding to a fixed point of ${\mathcal Z}_{T}$.}\label{fig:colored tiling}
\end{figure}

One may reformulate certain results about $BS^{\bf i}$ in terms of rhombic colorings; we refer to 
\cite[Section~3.2]{Escobar} for background with further references.

\begin{proposition} For $T\in\mathcal{T}(w)$, the $\sf{T}$-fixed points of ${\mathcal Z}_{T}$ (under the diagonal action) are in one-to-one correspondence 
with bipartitions of the rhombi of $T$.
\end{proposition}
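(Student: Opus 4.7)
The plan is to leverage Theorem~\ref{thm:main} together with the standard description of ${\sf T}$-fixed points of Bott-Samelson varieties as subexpressions. Under P.~Magyar's description, $BS^{(i_1,\ldots,i_{\ell(w)})}$ parametrizes flag sequences $(F_\bullet^{0},\ldots,F_\bullet^{\ell(w)})$ with $F_\bullet^{0}$ the standard flag and $F_\bullet^{k}$ differing from $F_\bullet^{k-1}$ at most in position $i_k$. At a ${\sf T}$-fixed point each $F_\bullet^{k}$ must be a coordinate flag, and setting $F_\bullet^{k}=F_\bullet^{(v_k)}$ with $v_0=e$ forces $v_k\in\{v_{k-1},\,v_{k-1}s_{i_k}\}$. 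This yields the well-known bijection between ${\sf T}$-fixed points of $BS^{\bf i}$ and $\{0,1\}^{\ell(w)}$. Via the isomorphism of Theorem~\ref{thm:main}, the positions $\{1,\ldots,\ell(w)\}$ correspond to the rhombi of $T$, so a fixed point produces a bipartition of the rhombi of $T$.

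Second, I would give an intrinsic description of this bijection, independent of any ordering. At a ${\sf T}$-fixed point every $V_x$ is a coordinate subspace of ${\mathbb C}^n$. Each rhombus of $T$ has four corner vertices: a unique vertex $a$ of minimal dimension, a unique vertex $d$ with $\dim V_d=\dim V_a+2$, and two side vertices $b,c$ that are geometrically distinguishable (say, $b$ lies clockwise from $a$ along the rhombus boundary). Since $V_d=V_a\oplus\langle e_\alpha,e_\beta\rangle$ for a unique pair $\alpha<\beta$, the unordered pair $\{V_b,V_c\}$ is forced to be $\{V_a\oplus\langle e_\alpha\rangle,\,V_a\oplus\langle e_\beta\rangle\}$. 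I color each rhombus by recording which of the two possibilities is realized at $V_b$. An induction on $\ell(w)$, peeling a boundary rhombus as in the proof of Theorem~\ref{thm:main}, then shows simultaneously that every bipartition arises in this way and that distinct bipartitions give distinct fixed points.

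The hard part will be checking canonicity — that the two descriptions above agree and, in particular, that the bipartition does not depend on the ordering of rhombi (hence on the representative of the commutation class) used to identify ${\mathcal Z}_T\cong BS^{\bf i}$. This reduces to the case of two rhombi $R_k,R_{k+1}$ that can be peeled off in either order, which corresponds to $|i_k-i_{k+1}|>1$. Under the explicit isomorphism $\tau_k$ of Theorem~\ref{thm:main}, the two binary choices at $R_k$ and $R_{k+1}$ are merely transposed, so the induced $2$-coloring of $T$ is unchanged. This closes the argument and establishes the claimed one-to-one correspondence.
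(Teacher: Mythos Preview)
Your first and third paragraphs are correct and essentially match the paper's argument: identify ${\mathcal Z}_T\cong BS^{\bf i}$ via Theorem~\ref{thm:main}, invoke the standard subexpression description of ${\sf T}$-fixed points of $BS^{\bf i}$ as $\{0,1\}^{\ell(w)}$, and check that the resulting bipartition of rhombi is unchanged under a commutation $\tau_k$. That already proves the proposition.

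The genuine gap is in your second paragraph. You assert that at a ${\sf T}$-fixed point the unordered pair $\{V_b,V_c\}$ is \emph{forced} to equal $\{V_a\oplus\langle e_\alpha\rangle,\,V_a\oplus\langle e_\beta\rangle\}$, hence $V_b\neq V_c$. This is false: nothing in the definition of ${\mathcal Z}_T$ prevents the two middle-dimensional subspaces of a rhombus from coinciding. Already for $w=s_1\in\mathfrak{S}_2$ (a single rhombus), the fixed point with $G_1=\mathbb{C}^1=\langle e_1\rangle$ has $V_b=V_c$. Consequently your proposed coloring (``which of the two possibilities is realized at $V_b$'') does not record the binary choice at that rhombus. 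Indeed, for any rhombus whose clockwise side vertex lies on the base boundary $B_0$, the space $V_b$ is fixed once and for all, so your color is constant across all $2^{\ell(w)}$ fixed points; the map from fixed points to colorings is not injective, and your claimed induction cannot go through.

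The paper's intrinsic coloring is the correct replacement: declare a rhombus \emph{light} if its two middle-dimensional subspaces agree ($V_b=V_c$) and \emph{dark} otherwise. This is exactly the subexpression bit (whether $F_\bullet^{k}$ differs from $F_\bullet^{k-1}$ at position $i_k$), it is manifestly insensitive to which of $b,c$ you call ``clockwise,'' and it is visibly independent of the peeling order --- which is precisely the canonicity you were aiming for in paragraph~3.
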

\begin{proof}
Consider a $2$-coloring of the rhombi of $T$ representing the bipartition (as shown in Figure~\ref{fig:colored tiling}). There is a unique
way to choose $\{V_x\}_{x \in {\tt Vert}(T)}$ such that 
\begin{enumerate}
\item each $V_x$ is the span of a subset 
of the standard basis; and,
\item for any rhombus, its two vector spaces of common dimension are
the same (resp., different) if the rhombus is light-colored (resp., dark-colored).
\end{enumerate}
Since the $\sf{T}$-action is diagonal, if $\{V_x\}_{x \in {\tt Vert}(T)}$ is a ${\sf T}$-fixed point of ${\mathcal Z}_{T}$, then each $V_x$ must be $\sf{T}$-fixed, i.e., each $V_x$ must be spanned by a subset of the standard basis $\{e_1,\ldots,e_n\}$. Using the required containment relations, we can inductively determine $V_x$ for each vertex of $T$ by following an ordering of the rhombi given by a representative of the commutation class of $T$. At a particular colored rhombus, we make the two vector spaces of common dimension
the same (resp., different) if the rhombus is light-colored (resp., dark-colored).
\begin{center}
 \begin{tikzpicture}
    \node (top) at (0,0) {$V_c=V_a\bigoplus\langle e_b,e_c \rangle$};
    \node [below of=top] (nothing)  {$=$ (resp., $\neq$)};
    \node[left of=nothing] (new) {};
        \node[right of=nothing] (rnothing) {};

    \node[left of=new] (newnew) {};    
    \node [left of=newnew] (left) {$V_b=V_a\bigoplus\langle e_b\rangle$};
    \node [right of=rnothing] (right) {$V_x$};
        \node [below of=nothing] (down) {$V_a$};
\draw [thick,shorten <= +4pt] (newnew) -- (top);
\draw [thick,shorten <=-2pt] (top) -- (right);
\draw [thick,shorten <=-2pt] (right) -- (down);
\draw [thick,shorten <=+4pt] (newnew) -- (down);
\end{tikzpicture}
\end{center}
Conversely, every ${\sf T}$-fixed point can be indicated by such a coloring. 
\end{proof}

M.~Demazure \cite{Demazure} used the ${\sf T}$-fixed points to prove that the image of $BS^{(i_1, i_2, \ldots)}$ under the Bott-Samelson map $\pi$ is indeed
the Schubert variety $X_{s_{i_1}s_{i_2}\ldots}$. 
These fixed points are also useful in the study of moment polytopes of Bott-Samelson varieties, 
and for other applications.

These colorings also correspond to a stratification of ${\mathcal Z}_{T}$ by smaller Bott-Samelsons. Given a coloring, the corresponding stratum has the property that, for any light-colored rhombus, its two vector spaces of common dimension are equal. The dark-colored rhombi impose no conditions. The unique smallest stratum corresponds to the all-light coloring, whereas the unique largest stratum corresponds to the all-dark one. For background, see \cite[Section~4.3]{Escobar}

In \cite{Elnitsky}, the author extends his main construction to the other Weyl groups of classical Lie type. This seems related 
to the Bott-Samelsons for the associated Lie groups.

\section*{Acknowledgements}

We thank Allen Knutson and Alexander Woo for helpful comments. OP was supported by an NSF Graduate Research Fellowship. BT was partially supported by a Simons Foundation Collaboration Grant for Mathematicians. AY was supported by an NSF grant.

\bibliography{ElnBottSamarxiv}
\bibliographystyle{alpha}

\end{document}